\newtheorem{theorem}{Theorem}
\theoremstyle{plain}
\numberwithin{equation}{section}
\begin{document}
\title[\textbf{Ruled Weingarten Surfaces related to dual spherical curves}]{%
\textbf{Ruled Weingarten Surfaces related to dual spherical curves}}
\author{\.{I}lkay Arslan G\"{u}ven}
\address{ }
\email{ }
\urladdr{}
\thanks{}
\author{Semra Kaya Nurkan}
\curraddr{ }
\email{ }
\urladdr{}
\thanks{}
\author{Murat Kemal Karacan}
\address{ }
\urladdr{}
\date{}
\subjclass[2000]{53A05, 53A10, 53A17 }
\keywords{Weingarten surface, ruled surface, dual curves}
\dedicatory{}
\thanks{}

\begin{abstract}
We study ruled surfaces in $\mathbb{R}^{3}$ which are obtained from dual
spherical indicatrix curves of dual Frenet vector fields. We find the
Gaussian and mean curvatures of the ruled surfaces and give some results of
being Weingarten surface.
\end{abstract}

\maketitle

\section{Introduction}

E. Study established a relationship of directed lines to dual unit vectors
and he defined a mapping which is called Study mapping. This mapping exists
one-to-one correspondence between the dual points of a dual unit sphere in $%
\mathbb{D}$-Module and the directed lines in $\mathbb{R}^{3}$.

A differentiable curve on the dual unit sphere, depending on a real
parameter $s$, represents a differentiable family of straight lines in $%
\mathbb{R}^{3}$ which is called ruled surface. This correspondence allows us
to study the properties of a ruled surface on the geometry of dual spherical
curves on a dual unit sphere \cite{Guven}.

Ruled surfaces of Weingarten type which have a nontrivial relation holds
between the Gaussian curvature and mean curvature, were studied by many
scientists in \cite{Dillen1, Kuhnel, Sod}. Also properties and inventions of
ruled linear Weingarten surfaces in Minkowski 3-space were investigated \cite%
{Abdel, Dillen1, Dillen2, Dillen3, Sod}.

In this paper we take a unit speed curve on the dual unit sphere and move
each vector of its Frenet frame to the center of dual unit sphere. These
Frenet vectors generates spherical representation curves on the dual unit
sphere. So we have ruled surfaces in $\mathbb{R}^{3}$ which have been
corresponded to representation curves , by Study mapping. Each ruled surface
is determined by a parametrization%
\begin{equation*}
\varphi (s,v)=\alpha (s)+v\overrightarrow{X}(s)
\end{equation*}%
where $\alpha $ is the base curve and $\overrightarrow{X}$ is director
vector field which will be Frenet vector field of dual curve in this paper.

We study on these ruled surfaces that which one is Weingarten and minimal
surface. The conditions of being Weingarten and minimal ruled surfaces were
given by theorems.

\section{Preliminaries}

Dual numbers were defined \ in the 19$^{th}$ century by W.K. Clifford. The
set of all dual numbers $\mathbb{D}$ consists of elements in the form of $%
A=a+\varepsilon a^{\ast }$, where $a$ and $a^{\ast }$ are real numbers, $%
\varepsilon $ is the dual unit with the property of $\varepsilon ^{2}=0$.
The set 
\begin{equation*}
\mathbb{D}=\left\{ A=a+\varepsilon a^{\ast }\mid a,a^{\ast }\epsilon \text{ }%
\mathbb{R}\right\}
\end{equation*}%
forms a commutative ring with the following operations%
\begin{eqnarray*}
i)\text{ \ }(a+\varepsilon a^{\ast })+(b+\varepsilon b^{\ast })
&=&(a+b)+\varepsilon (a^{\ast }+b^{\ast }) \\
ii)\text{ \ }(a+\varepsilon a^{\ast }).(b+\varepsilon b^{\ast })
&=&a.b+\varepsilon (ab^{\ast }+ba^{\ast }).
\end{eqnarray*}

The division of two dual numbers $A=a+\varepsilon a^{\ast }$ and $%
B=b+\varepsilon b^{\ast }$ provided $b\neq 0$ can be defined as%
\begin{equation*}
\frac{A}{B}=\frac{a+\varepsilon a^{\ast }}{b+\varepsilon b^{\ast }}=\frac{a}{%
b}+\varepsilon \frac{a^{\ast }b-ab^{\ast }}{b^{2}}.
\end{equation*}%
The set $\mathbb{D}^{3}$ is a module on the ring $\mathbb{D}$ which is
called $\mathbb{D}$-Module or dual space and is denoted by 
\begin{equation*}
\mathbb{D}^{3}=\mathbb{D}\times \mathbb{D}\times \mathbb{D}=\left\{ 
\begin{array}{c}
\overrightarrow{A}\mid \overrightarrow{A}=\left( a_{1}+\varepsilon
a_{1}^{\ast },a_{2}+\varepsilon a_{2}^{\ast },a_{3}+\varepsilon a_{3}^{\ast
}\right) \\ 
=\left( a_{1}+a_{2}+a_{3}\right) +\varepsilon \left( a_{1}^{\ast
}+a_{2}^{\ast }+a_{3}^{\ast }\right) \\ 
=a+\varepsilon a^{\ast },\text{ \ \ }a\epsilon \mathbb{R}^{3},a^{\ast
}\epsilon \mathbb{R}^{3}%
\end{array}%
\right\}
\end{equation*}%
where the elements are dual vectors. For $a\neq 0$, the norm $\left\Vert 
\overrightarrow{A}\right\Vert $of $\overrightarrow{A}$ is defined by%
\begin{equation*}
\left\Vert \overrightarrow{A}\right\Vert =\sqrt{\left\langle \overrightarrow{%
A},\overrightarrow{A}\right\rangle }=\left\Vert \overrightarrow{a}%
\right\Vert +\varepsilon \frac{\left\langle \overrightarrow{a},%
\overrightarrow{a^{\ast }}\right\rangle }{\left\Vert \overrightarrow{a}%
\right\Vert }.
\end{equation*}%
Now let us give basic concepts of a dual curve and dual Frenet frame.

Let%
\begin{equation*}
\begin{array}{ccc}
\widehat{\alpha }:I & \longrightarrow & \mathbb{D}^{3}\phantom{= V_{i}(s);
\quad 1\leq i\leq n-1} \\ 
\phantom{\alpha : }s & \longrightarrow & \overrightarrow{\widehat{\alpha }}%
(s)=\overrightarrow{\alpha }(s)+\varepsilon \overrightarrow{\alpha ^{\ast }}%
(s)%
\end{array}%
\end{equation*}%
be a dual curve with arc-length parameter $s$. Then 
\begin{equation*}
\frac{d\overrightarrow{\widehat{\alpha }}}{d\widehat{s}}=\frac{d%
\overrightarrow{\widehat{\alpha }}}{ds}.\frac{ds}{d\widehat{s}}=%
\overrightarrow{\widehat{T}}
\end{equation*}%
is called the unit tangent vector of $\widehat{\alpha }(s)$. The derivative
of $\overrightarrow{\widehat{T}}$ is 
\begin{equation*}
\frac{d\overrightarrow{\widehat{T}}}{d\widehat{s}}=\frac{d\overrightarrow{%
\widehat{T}}}{ds}.\frac{ds}{d\widehat{s}}=\frac{d^{2}\overrightarrow{%
\widehat{\alpha }}}{d\widehat{s}^{2}}=\widehat{\kappa }\overrightarrow{%
\widehat{N}}
\end{equation*}%
and the norm of the vector$\frac{d\overrightarrow{\widehat{T}}}{d\widehat{s}}
$ is called curvature function of $\overrightarrow{\widehat{\alpha }}(s)$.
Here $\widehat{\kappa }:I\longrightarrow \mathbb{D}$ is never pure-dual.
Then the unit principal normal vector of $\overrightarrow{\widehat{\alpha }}%
(s)$ is defined as%
\begin{equation*}
\overrightarrow{\widehat{N}}=\frac{1}{\widehat{\kappa }}.\frac{d%
\overrightarrow{\widehat{T}}}{d\widehat{s}}
\end{equation*}%
The vector $\overrightarrow{\widehat{B}}=\overrightarrow{\widehat{T}}\times 
\overrightarrow{\widehat{N}}$ is called the binormal vector of $%
\overrightarrow{\widehat{\alpha }}(s)$. Also we call the vectors $%
\overrightarrow{\widehat{T}},\overrightarrow{\widehat{N}},\overrightarrow{%
\widehat{B}}$ dual Frenet trihedron of $\overrightarrow{\widehat{\alpha }}%
(s) $ at the point $\widehat{\alpha }(s)$. The derivatives of dual Frenet
vectors $\overrightarrow{\widehat{T}},\overrightarrow{\widehat{N}},%
\overrightarrow{\widehat{B}}$ can be written in matrix form as 
\begin{equation*}
\left[ 
\begin{array}{l}
\overrightarrow{\widehat{T}}^{\prime } \\ 
\overrightarrow{\widehat{N}}^{\prime } \\ 
\overrightarrow{\widehat{B}}^{\prime }%
\end{array}%
\right] =\left[ 
\begin{array}{lll}
0 & \widehat{\kappa } & 0 \\ 
-\widehat{\kappa } & 0 & \widehat{\tau } \\ 
0 & -\widehat{\tau } & 0%
\end{array}%
\right] \left[ 
\begin{array}{l}
\overrightarrow{\widehat{T}} \\ 
\overrightarrow{\widehat{N}} \\ 
\overrightarrow{\widehat{B}}%
\end{array}%
\right]
\end{equation*}%
which are called Frenet formulas \cite{Kose}. The function $\widehat{\tau }%
:I\longrightarrow \mathbb{D}$ such that $\frac{d\overrightarrow{\widehat{B}}%
}{d\widehat{s}}=-\widehat{\tau }\overrightarrow{\widehat{N}}$ is called the
torsion of $\overrightarrow{\widehat{\alpha }}(s)$.

A ruled surface in $\mathbb{R}^{3}$ is swept up by a straight line $\ell $
which is moving along a curve $\alpha .$ It is defined by the
parametrization $\varphi (s,v)=\alpha (s)+v\overrightarrow{X}(s)$, where $%
\alpha $ is differentiable base curve and $\overrightarrow{X}$ is a nowhere
vanishing director vector field of $\ell $. The lines $\ell $ are called the
rullings of the surface. A ruled surface is said to be developable if the
Gaussian curvature of surface $K$ is zero. The tangent plane of developable
surface is constant along a fixed ruling. If the mean curvature of surface $%
H $ is zero, then the ruled surface is called minimal surface.

A Weingarten surface is a surface for which the Gaussian curvature $K$ and
the mean curvature $H$ satisfy a nontrivial relation $\Phi (H,K)=0$. For
ruled surfaces in $\mathbb{E}^{3}$ Dini and Beltrami expressed a theorem in
1865 which says that any non-developable ruled Weingarten surface in
Euclidean 3-space $\mathbb{E}^{3}$ is a piece of a helicoidal ruled surface,
defined as the orbit of a straight line under the action of a 1-parameter
group of screw motions. In particular, the Gaussian curvature is nowhere
zero if it is nonzero at some point. The only minimal ruled surface \ is the
classical right helicoid. In \cite{Kuhnel}, this theorem is stated directly
as; among the ruled surfaces, the class of Weingarten surfaces is the set of
all developable surfaces and all helicoidal ruled surfaces.

\section{Gaussian and Mean Curvatures of Ruled Surfaces}

In this section we will compute the Gaussian and mean curvatures of ruled
surfaces which are obtained from dual spherical indicatrix curves of dual
Frenet vector fields. According to the theorem in \cite{Kuhnel}, we will say
that the developable surfaces are Weingarten surfaces. Also we know that if
Gaussian and mean curvature of the ruled surface are zero, then the ruled
surface is called developable and minimal surface, respectively. So we will
investigate the conditions of Gaussian and mean curvatures to being zero.

A curve $\widehat{\alpha }$ is taken on dual unit sphere and dual spherical
representation curves are formed on dual unit sphere by moving\ the Frenet
vectors to the center of dual unit sphere. These representation curves which
we will denote as $(X)$ are corresponded to ruled surfaces in $\mathbb{R}%
^{3} $. The Frenet vectors of $\widehat{\alpha }$ ; unit tangent vector,
unit principal normal vector and unit binormal vector are $\widehat{T}=T+$ $%
\varepsilon T^{\ast }$, \ $\widehat{N}=N+$ $\varepsilon N^{\ast }$ , $%
\widehat{B}=B+$ $\varepsilon B^{\ast }$ , respectively. Also curvature and
torsion of $\widehat{\alpha }$ will be denoted as $\widehat{\kappa }=\kappa
+\varepsilon \kappa ^{\ast }$ and $\widehat{\tau }=\tau +\varepsilon \tau
^{\ast }$. We get the equations below from Frenet formulas and properties of
Frenet vectors:

\begin{eqnarray*}
T\times N &=&B\text{ \ \ \ \ \ \ \ \ \ \ and \ \ \ \ \ \ \ \ \ }T^{\ast
}=N^{\ast }\times B+N\times B^{\ast } \\
N\times B &=&T\text{ \ \ \ \ \ \ \ \ \ \ \ \ \ \ \ \ \ \ \ \ \ \ \ \ \ \ }%
N^{\ast }=B^{\ast }\times T+B\times T^{\ast } \\
B\times T &=&N\text{ \ \ \ \ \ \ \ \ \ \ \ \ \ \ \ \ \ \ \ \ \ \ \ \ \ \ }%
B^{\ast }=T^{\ast }\times N+T\times N^{\ast }
\end{eqnarray*}%
and the derivatives are%
\begin{eqnarray*}
T^{\prime } &=&\kappa N\text{ \ \ \ \ \ \ \ \ \ \ \ \ \ \ \ \ \ \ \ and \ \
\ \ \ \ \ \ }T^{\ast \prime }=\kappa N^{\ast }+\kappa ^{\ast }N \\
N^{\prime } &=&-\kappa T+\tau B\text{ \ \ \ \ \ \ \ \ \ \ \ \ \ \ \ \ \ \ \
\ \ \ \ \ }N^{\ast \prime }=-\kappa T^{\ast }-\kappa ^{\ast }T+\tau B^{\ast
}+\tau ^{\ast }B \\
B^{\prime } &=&-\tau N\text{ \ \ \ \ \ \ \ \ \ \ \ \ \ \ \ \ \ \ \ \ \ \ \ \
\ \ \ \ \ \ \ \ }B^{\ast \prime }=-\tau N^{\ast }-\tau ^{\ast }N.
\end{eqnarray*}%
Now we will express the dual vectoral equations of each ruled surface. We
denote ruled surfaces which are corressponded to dual spherical
representation curves as $\phi _{X}(s,v)$. The equations are 
\begin{eqnarray*}
\phi _{T}(s,v) &=&T(s)\times T^{\ast }(s)+vT(s)\text{ \ \ , \ \ \ \ \ \ }%
T^{\ast }(s)=\beta _{T}(s)\times T(s) \\
\phi _{N}(s,v) &=&N(s)\times N^{\ast }(s)+vN(s)\text{ \ \ , \ \ \ \ \ }%
N^{\ast }(s)=\beta _{N}(s)\times N(s) \\
\phi _{B}(s,v) &=&B(s)\times B^{\ast }(s)+vB(s)\text{ \ \ , \ \ \ \ \ \ }%
B^{\ast }(s)=\beta _{B}(s)\times B(s)
\end{eqnarray*}%
where $\beta _{X}(s)$ is the base curve of ruled surface and $X^{\ast
}(s)=\beta _{X}(s)\times X(s)$ is vectoral moment of the vector $X$ , also $%
s $ is not arc-length parameter of base curve.

Let find the Gaussian curvatures of each ruled surface. We will calculate
Gaussian curvatures from the matrix of shape operator of ruled surfaces.

We take up first ruled surface whose dual vectoral equation is 
\begin{equation*}
\phi _{T}(s,v)=T(s)\times T^{\ast }(s)+vT(s)\text{ \ \ , \ \ }T^{\ast
}(s)=\beta _{T}(s)\times T(s).
\end{equation*}%
The director vector of this ruled surfaces is $T$ and let the normal vector
be $N_{T}$. Since the derivatives%
\begin{eqnarray*}
(\phi _{T})_{v} &=&T \\
(\phi _{T})_{s} &=&-\kappa \langle \beta _{T},N\rangle T-\kappa \langle
\beta _{N},T\rangle N+\kappa ^{\ast }B+v\kappa N
\end{eqnarray*}%
are not orthogonal, we will convert these vectors to a orthonormal base by
Gramm Schmidt method and we will compute the matrix of shape operator with
respect to this orthonormal base.

For \ $(\phi _{T})_{v}=X_{1}$ \ and $\ (\phi _{T})_{s}=X_{2}$ ; the vectors
of orthonormal base $\left\{ E_{1},E_{2}\right\} $ are%
\begin{eqnarray*}
E_{1} &=&\frac{Y_{1}}{\left\Vert Y_{1}\right\Vert }=T \\
E_{2} &=&\frac{Y_{2}}{\left\Vert Y_{2}\right\Vert }=\frac{\kappa (-\langle
\beta _{N},T\rangle +v)N+\kappa ^{\ast }B}{\sqrt{\kappa ^{2}(-\langle \beta
_{N},T\rangle +v)^{2}+\kappa ^{\ast 2}}}
\end{eqnarray*}%
where $Y_{1}=X_{1}=T$ and $Y_{2}=\kappa (-\langle \beta _{N},T\rangle
+v)N+\kappa ^{\ast }B$. By doing simple operation we get $\langle
E_{1},E_{2}\rangle =0$.

The normal vector of this ruled surface is given by%
\begin{eqnarray*}
N_{T} &=&E_{1}\times E_{2} \\
&=&\frac{\kappa (-\langle \beta _{N},T\rangle +v)B-\kappa ^{\ast }N}{\sqrt{%
\kappa ^{2}(-\langle \beta _{N},T\rangle +v)^{2}+\kappa ^{\ast 2}}}.
\end{eqnarray*}

If the matrix of shape operator is $S_{T}$, then 
\begin{equation*}
S_{T}=\left[ 
\begin{array}{cc}
\langle S_{T}(E_{1}),E_{1}\rangle & \langle S_{T}(E_{1}),E_{2}\rangle \\ 
\langle S_{T}(E_{2}),E_{1}\rangle & \langle S_{T}(E_{2}),E_{2}\rangle%
\end{array}%
\right]
\end{equation*}%
can be written. Since $T$ is director vector, it is an asymtotic line so $%
\langle S_{T}(E_{1}),E_{1}\rangle $ $=\langle S_{T}(T),T\rangle =0$ and
since the shape operator is symetric, then $\langle
S_{T}(E_{1}),E_{2}\rangle =$ $\langle S_{T}(E_{2}),E_{1}\rangle $. \ By
above calculations the Gaussian curvature $K_{T}$ is determined by%
\begin{eqnarray*}
K_{T} &=&\det S_{T} \\
&=&-(\langle S_{T}(E_{2}),E_{1}\rangle )^{2}
\end{eqnarray*}%
where $S_{T}(E_{2})=D_{E_{2}}N_{T}=\frac{1}{\left\Vert Y_{2}\right\Vert }.%
\frac{dN_{T}}{ds}$.

We obtain the Gaussian curvature by doing essential operations as%
\begin{eqnarray}
K_{T} &=&-\frac{\kappa ^{2}\kappa ^{\ast 2}}{(\kappa ^{2}(-\langle \beta
_{N},T\rangle +v)^{2}+\kappa ^{\ast 2})^{2}}.  \TCItag{1} \\
&&  \notag
\end{eqnarray}%
Also the mean curvature $H_{T}$ is given by%
\begin{eqnarray}
H_{T} &=&Tr(S_{T})  \notag \\
&=&\langle S_{T}(E_{2}),E_{2}\rangle  \notag \\
&=&\frac{1}{\left\Vert Y_{2}\right\Vert ^{3}}.\left\{ 
\begin{array}{c}
-\kappa ^{2}\tau \langle \beta _{N},T\rangle ^{2}+\langle \beta
_{N},T\rangle (\kappa ^{\ast \prime }\kappa +2v\kappa ^{2}\tau -\kappa
^{\ast }\kappa ^{\prime }) \\ 
-\kappa ^{\ast }\kappa \langle \beta _{N},T\rangle ^{\prime }-v\kappa \kappa
^{\ast \prime }-v^{2}\kappa ^{2}\tau -\kappa ^{\ast 2}\tau +v\kappa ^{\ast
}\kappa ^{\prime }%
\end{array}%
\right\}  \TCItag{2}
\end{eqnarray}%
We obtain Gaussian and mean curvatures of other ruled surfaces below by
doing similar calculations.

For ruled surface corressponded to dual spherical principal normal
representation curve;%
\begin{eqnarray}
K_{N} &=&-\frac{(\kappa \kappa ^{\ast }+\tau \tau ^{\ast })^{2}}{\left\Vert
Y_{2_{N}}\right\Vert ^{4}}  \TCItag{3} \\
&&  \notag
\end{eqnarray}%
where 
\begin{equation*}
\left\Vert Y_{2_{N}}\right\Vert ^{2}=%
\begin{array}{c}
\kappa ^{2}\langle \beta _{T},N\rangle ^{2}+(2\kappa \tau ^{\ast }-2\kappa
^{2}v)\langle \beta _{T},N\rangle +\kappa ^{\ast 2} \\ 
-(2\tau \kappa ^{\ast }+2\tau ^{2}v)\langle \beta _{B},N\rangle +2\kappa
^{\ast }\tau v+\tau ^{2}\langle \beta _{B},N\rangle ^{2} \\ 
+\tau ^{\ast 2}-2\kappa \tau ^{\ast }v+v^{2}(\kappa ^{2}+\tau ^{2})%
\end{array}%
\end{equation*}%
and%
\begin{eqnarray}
H_{N} &=&\frac{1}{\left\Vert Y_{2_{N}}\right\Vert ^{3}}.\left\{ 
\begin{array}{c}
(\kappa ^{\ast ^{\prime }}\kappa -\kappa ^{^{\prime }}\kappa ^{\ast }+\tau
^{\ast ^{\prime }}\tau -\tau ^{^{\prime }}\tau ^{\ast })(\langle \beta
_{T},N\rangle -v) \\ 
+(\kappa ^{\prime }\tau -\tau ^{\prime }\kappa )(\langle \beta _{T},N\rangle
^{2}-2v\langle \beta _{T},N\rangle +v^{2}) \\ 
-\langle \beta _{T},N\rangle ^{\prime }(\kappa \kappa ^{\ast }+\tau \tau
^{\ast })+\kappa ^{\ast ^{\prime }}\tau ^{\ast }-\tau ^{\ast \prime }\kappa
^{\ast }%
\end{array}%
\right\}  \TCItag{4} \\
&&  \notag
\end{eqnarray}%
are obtained. For ruled surface corressponded to dual spherical binormal
representation curve;%
\begin{eqnarray}
K_{B} &=&-\frac{\tau ^{2}\tau ^{\ast 2}}{\left\Vert Y_{2_{B}}\right\Vert ^{4}%
}  \TCItag{5} \\
&&  \notag
\end{eqnarray}%
where%
\begin{equation*}
\left\Vert Y_{2_{B}}\right\Vert ^{2}=\tau ^{2}(\langle \beta _{N},B\rangle
-v)^{2}+\tau ^{\ast 2}
\end{equation*}%
and%
\begin{eqnarray}
H_{B} &=&\frac{1}{\left\Vert Y_{2_{B}}\right\Vert ^{3}}\left\{ 
\begin{array}{c}
-\tau ^{2}\kappa \langle \beta _{N},B\rangle ^{2}+\langle \beta
_{N},B\rangle (-\tau ^{\prime }\tau ^{\ast }+2v\tau ^{2}\kappa +\tau ^{\ast
^{\prime }}\tau ) \\ 
-\tau \tau ^{\ast }\langle \beta _{N},B\rangle ^{\prime }-\kappa \tau ^{\ast
2}+v\tau ^{\prime }\tau ^{\ast }-v\tau ^{\ast ^{\prime }}\tau -v^{2}\tau
^{2}\kappa%
\end{array}%
\right\}  \TCItag{6} \\
&&  \notag
\end{eqnarray}
are obtained.

\section{Results}

In this section we will express theorems and use Gaussian and mean
curvatures to prove them.

\begin{theorem}
The non-developable ruled surface in $\mathbb{R}^{3}$ which is corressponded
to dual spherical representation curve $(T)$ is Weingarten and minimal
surface if and only if $\kappa ^{\ast }=0$ and $\tau =0$, for $\kappa \neq 0$
and $\tau ^{\ast }\neq 0$, where $\widehat{\kappa }=\kappa +\varepsilon
\kappa ^{\ast }$ and $\widehat{\tau }=\tau +\varepsilon \tau ^{\ast }$ are
curvature and torsion of the dual curve respectively.
\end{theorem}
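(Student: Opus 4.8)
The plan is to split the biconditional and to route the Weingarten half through the Dini--Beltrami characterization recalled in the Preliminaries. By that result, among ruled surfaces in $\mathbb{E}^{3}$ the Weingarten ones are exactly the developable surfaces together with the helicoidal ruled surfaces, and the unique minimal ruled surface is the right helicoid. A right helicoid is helicoidal, hence a non-developable minimal ruled surface is automatically Weingarten. Thus, for $\phi_{T}$ the conjunction ``Weingarten and minimal'' collapses to minimality alone, i.e. to $H_{T}\equiv 0$, and the whole theorem reduces to reading off from the explicit formulas $(1)$ and $(2)$ precisely when $H_{T}$ vanishes identically in the ruling parameter.

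I would first prove necessity. Since $\left\Vert Y_{2}\right\Vert \neq 0$ on a regular surface, $H_{T}\equiv 0$ is equivalent to the vanishing of the braced numerator of $(2)$ for every value of $v$. Collecting that numerator as a polynomial in $v$, its degree-two part is $-\kappa^{2}\tau$, its degree-one part is $2\kappa^{2}\tau\langle\beta_{N},T\rangle-\kappa\kappa^{\ast\prime}+\kappa^{\ast}\kappa^{\prime}$, and its constant part is $-\kappa^{2}\tau\langle\beta_{N},T\rangle^{2}+(\kappa\kappa^{\ast\prime}-\kappa^{\ast}\kappa^{\prime})\langle\beta_{N},T\rangle-\kappa^{\ast}\kappa\langle\beta_{N},T\rangle^{\prime}-\kappa^{\ast 2}\tau$. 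Forcing each coefficient to vanish, the $v^{2}$-coefficient gives $\tau=0$ because $\kappa\neq 0$; substituting $\tau=0$ into the $v^{1}$-coefficient yields $\kappa\kappa^{\ast\prime}=\kappa^{\ast}\kappa^{\prime}$; and feeding both relations into the constant term leaves $\kappa^{\ast}\kappa\langle\beta_{N},T\rangle^{\prime}=0$. With $\kappa\neq 0$ this is the step that is meant to deliver $\kappa^{\ast}=0$.

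For sufficiency I would substitute $\tau=0$ and $\kappa^{\ast}=0$ directly into $(2)$: every surviving monomial in the braces carries a factor of $\tau$ or of $\kappa^{\ast}$, so the numerator collapses to zero and $H_{T}\equiv 0$, so the surface is minimal. The standing hypotheses $\kappa\neq 0$ and $\tau^{\ast}\neq 0$ guarantee that the dual indicatrix curve $(T)$ is genuinely non-degenerate, so that $\phi_{T}$ is an honest surface rather than a point or a line; being a minimal ruled surface it is a right helicoid, hence Weingarten, closing the equivalence through the reduction above.

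The hard part is the constant-term step in the necessity argument: after using the $v^{2}$- and $v^{1}$-relations one is left only with $\kappa^{\ast}\kappa\langle\beta_{N},T\rangle^{\prime}=0$, and one must rule out the rival branch $\langle\beta_{N},T\rangle^{\prime}=0$ with $\kappa^{\ast}\neq 0$. This is also where the statement strains against $(1)$: because $K_{T}=-\kappa^{2}\kappa^{\ast 2}/\left\Vert Y_{2}\right\Vert^{4}$, the conclusion $\kappa^{\ast}=0$ forces $K_{T}\equiv 0$, so the characterizing conditions render the surface developable, in apparent tension with the ``non-developable'' hypothesis. The genuine work therefore lies in reconciling this---clarifying, via the roles of $\kappa\neq 0$ and $\tau^{\ast}\neq 0$, how the base-curve datum $\langle\beta_{N},T\rangle$ is constrained so that $\kappa^{\ast}=0$ is the only admissible reading of minimality, and in what precise (possibly limiting) sense the surface is to be regarded as the minimal helicoidal case.
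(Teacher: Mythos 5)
Your sufficiency half reproduces, with more detail, essentially all that the paper actually proves: substituting $\kappa ^{\ast }=0$ and $\tau =0$ into equations (1) and (2) annihilates both curvatures, and the Weingarten property follows. Note, however, that the paper obtains Weingarten-ness from the \emph{developable} branch of the dichotomy in K\"{u}hnel's theorem ($K_{T}=0$), not the helicoidal one; your routing through the right helicoid is wrong at exactly this point, since with $\kappa ^{\ast }=0$ equation (1) forces $K_{T}\equiv 0$, so a surface satisfying the stated conditions has $K_{T}=H_{T}=0$ and is a piece of a plane, not a helicoid. This is the same internal contradiction you flag at the end: the paper's own proof literally argues ``Since Gaussian curvature is zero then the ruled surface is developable,'' in direct conflict with the hypothesis ``non-developable'' in the statement being proved.

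The genuine gap is the one you honestly leave open in the necessity direction, and it cannot be closed, because the ``only if'' half is false as stated. Your coefficient extraction is correct: minimality as an identity in $v$ forces $\tau =0$ (from the $v^{2}$-coefficient, using $\kappa \neq 0$), then $\kappa \kappa ^{\ast \prime }=\kappa ^{\ast }\kappa ^{\prime }$, i.e. $\kappa ^{\ast }=c\kappa $ for a constant $c$, and finally $\kappa ^{\ast }\kappa \langle \beta _{N},T\rangle ^{\prime }=0$. But the rival branch $\kappa ^{\ast }\neq 0$, $\langle \beta _{N},T\rangle ^{\prime }=0$ is not spurious: taking $\tau \equiv 0$, $\kappa ^{\ast }=c\kappa $ with $c\neq 0$, and $\langle \beta _{N},T\rangle $ constant makes the braced numerator of (2) vanish identically, so $H_{T}\equiv 0$, while (1) gives $K_{T}=-c^{2}\kappa ^{4}/\left\Vert Y_{2}\right\Vert ^{4}\neq 0$ (and $\left\Vert Y_{2}\right\Vert ^{2}=\kappa ^{2}(v-\langle \beta _{N},T\rangle )^{2}+c^{2}\kappa ^{2}>0$, so the surface is regular). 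This is a genuinely non-developable minimal ruled surface --- by the Dini--Beltrami result a piece of the right helicoid, hence Weingarten --- that does not satisfy $\kappa ^{\ast }=0$, so necessity fails. The paper never confronts this: it disposes of the converse with the single sentence ``The opposite condition can be proved similarly,'' which is precisely where your unresolved branch lives. Your proposal therefore does not prove the theorem, but the fault lies in the statement rather than in your method: a corrected version would either drop ``non-developable'' (and then the conditions characterize the degenerate planar case) or replace $\kappa ^{\ast }=0$, $\tau =0$ by the branch $\tau =0$, $\kappa ^{\ast }=c\kappa $, $\langle \beta _{N},T\rangle ^{\prime }=0$ that you uncovered.
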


\begin{proof}
In equations (1) and (2) if we put $\kappa ^{\ast }=0$ and $\tau =0$, for $%
\kappa \neq 0$ and $\tau ^{\ast }\neq 0$ we find that Gaussian curvature and
mean curvature are zero. Since Gaussian curvature is zero then the ruled
surface is developable,so it is Weingarten surface because of the theorem
given in \cite{Kuhnel} and the ruled surface is minimal due to the mean
curvature which is zero.

The opposite condition can be proved similarly.
\end{proof}

\begin{theorem}
The non-developable ruled surface in $\mathbb{R}^{3}$ which is corressponded
to dual spherical representation curve $(N)$ is Weingarten and minimal
surface if and only if $\kappa =0$ and $\tau ^{\ast }=0$ \ or $\kappa ^{\ast
}=0$ and $\tau =0$, where $\widehat{\kappa }=\kappa +\varepsilon \kappa
^{\ast }$ and $\widehat{\tau }=\tau +\varepsilon \tau ^{\ast }$ are
curvature and torsion of the dual curve respectively.
\end{theorem}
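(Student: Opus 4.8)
The plan is to exploit the explicit formulas for the Gaussian curvature $K_N$ in (3) and the mean curvature $H_N$ in (4), reducing the biconditional to an algebraic statement about when the numerators of these two fractions vanish simultaneously. Since the surface is assumed non-developable, the denominators $\left\Vert Y_{2_{N}}\right\Vert$ are nonzero, so $K_N=0$ and $H_N=0$ are equivalent to the vanishing of the respective numerators. The forward implication is the routine substitution: I would plug each of the two hypothesized cases, namely $\kappa=0,\ \tau^{\ast}=0$ and $\kappa^{\ast}=0,\ \tau=0$, into the numerators of (3) and (4) and verify they collapse to zero, then invoke the theorem from \cite{Kuhnel} to conclude the developable (hence Weingarten) and minimal conclusions exactly as in the proof of Theorem 1.

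The substance lies in the converse. From $K_N=0$ one gets $\kappa\kappa^{\ast}+\tau\tau^{\ast}=0$, and from $H_N=0$ one gets the vanishing of the bracketed expression in (4). The key step is to show that these two simultaneous conditions force one of the two stated alternatives. My approach would be to treat $\langle\beta_T,N\rangle-v$ as a free parameter: since $v$ ranges over an interval while the Frenet quantities $\kappa,\kappa^{\ast},\tau,\tau^{\ast}$ depend only on $s$, the expression $H_N\cdot\left\Vert Y_{2_{N}}\right\Vert^{3}$ is a polynomial in the single variable $w:=\langle\beta_T,N\rangle-v$, and it must vanish identically in $w$. Equating the coefficients of $w^{2}$, $w^{1}$, and $w^{0}$ to zero yields
\begin{eqnarray*}
\kappa^{\prime}\tau-\tau^{\prime}\kappa &=&0, \\
\kappa^{\ast\prime}\kappa-\kappa^{\prime}\kappa^{\ast}+\tau^{\ast\prime}\tau-\tau^{\prime}\tau^{\ast} &=&0, \\
\kappa^{\ast\prime}\tau^{\ast}-\tau^{\ast\prime}\kappa^{\ast} &=&0,
\end{eqnarray*}
after absorbing the $\langle\beta_T,N\rangle^{\prime}$ term appropriately. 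Together with the algebraic relation $\kappa\kappa^{\ast}+\tau\tau^{\ast}=0$ coming from $K_N=0$, these should pin down the solution set to the two claimed branches.

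The hard part will be the case analysis that converts this system into exactly the disjunction $\{\kappa=0,\ \tau^{\ast}=0\}\cup\{\kappa^{\ast}=0,\ \tau=0\}$, rather than some spurious third family. The relation $\kappa\kappa^{\ast}=-\tau\tau^{\ast}$ is the pivotal constraint, and I expect it to combine with the first and third coefficient equations (which are Wronskian-type conditions asserting that $(\kappa,\tau)$ and $(\kappa^{\ast},\tau^{\ast})$ are each proportional pairs) to eliminate the generic possibilities. A clean way to organize this is to split on whether $\kappa\equiv 0$ or not, and within each branch use the proportionality relations to propagate the vanishing to the partner quantity. One should be careful here that the derivations hold pointwise and that dividing by a possibly-vanishing quantity is avoided, so the argument may need to pass to connected subintervals where a given function is nonzero and then invoke continuity. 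Once the converse algebra is settled, the geometric conclusion (developable $\Rightarrow$ Weingarten via \cite{Kuhnel}, and minimal via $H_N=0$) follows verbatim from the template of Theorem 1.
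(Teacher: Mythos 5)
Your ``if'' direction matches everything the paper actually does: the paper's entire proof of this theorem is a single sentence deferring to the substitution template of Theorem 1 (plug the hypotheses into (3) and (4), observe $K_N=H_N=0$, cite \cite{Kuhnel}), and the converse is waved off with ``similarly.'' So your coefficient-extraction machinery for the converse is genuinely new relative to the paper --- but the step you defer as ``the hard part'' is not hard, it is impossible. The pivotal constraint $\kappa\kappa^{\ast}+\tau\tau^{\ast}=0$ is the vanishing of a \emph{sum}: unlike the numerators $\kappa^{2}\kappa^{\ast2}$ of (1) and $\tau^{2}\tau^{\ast2}$ of (5), which are products and hence force a factor to vanish, the numerator of (3) can vanish with all four scalars nonzero. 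Concretely, take any constant $c\neq0$ and any data with
\begin{equation*}
\kappa=c\,\tau,\qquad \tau^{\ast}=-c\,\kappa^{\ast},\qquad \kappa,\kappa^{\ast},\tau,\tau^{\ast}\ \text{all nonzero}
\end{equation*}
(e.g.\ $\kappa=\tau$, $\tau^{\ast}=-\kappa^{\ast}$). Then $\kappa\kappa^{\ast}+\tau\tau^{\ast}=0$, the Wronskian conditions $\kappa^{\prime}\tau-\tau^{\prime}\kappa=0$ and $\kappa^{\ast\prime}\tau^{\ast}-\tau^{\ast\prime}\kappa^{\ast}=0$ hold, and a direct computation shows the middle coefficient $\kappa^{\ast\prime}\kappa-\kappa^{\prime}\kappa^{\ast}+\tau^{\ast\prime}\tau-\tau^{\prime}\tau^{\ast}$ also vanishes identically ($c\kappa^{\ast\prime}\tau-c\tau^{\prime}\kappa^{\ast}-c\kappa^{\ast\prime}\tau+c\tau^{\prime}\kappa^{\ast}=0$). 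Hence every coefficient of your polynomial in $w$ is zero, so $K_N\equiv H_N\equiv 0$, yet neither $\{\kappa=0,\ \tau^{\ast}=0\}$ nor $\{\kappa^{\ast}=0,\ \tau=0\}$ holds. The ``spurious third family'' you worried about exists and survives your proportionality relations; no case analysis on $\kappa\equiv0$ versus $\kappa\not\equiv0$ can eliminate it. Note that Theorems 1 and 3 carry the side hypotheses $\kappa\neq0,\ \tau^{\ast}\neq0$ (resp.\ $\kappa^{\ast}\neq0,\ \tau\neq0$) precisely because their $K$-numerators factor; the present statement has no analogous hypotheses, and its ``only if'' half is simply not an algebraic consequence of (3) and (4).

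A secondary gap: your converse opens with ``From $K_N=0$ one gets $\ldots$'' without explaining why a Weingarten and minimal surface has $K_N=0$. Minimality gives $H_N=0$, but the Weingarten hypothesis does not yield $K_N=0$; by the Dini--Beltrami/K\"{u}hnel classification quoted in the paper, a non-planar minimal ruled surface is a helicoid, which is Weingarten with $K<0$, so if anything the hypotheses pull in the opposite direction (and sit awkwardly with the theorem's ``non-developable'' premise). To be fair, the paper's own one-line proof shares both lacunae, so your proposal faithfully reproduces and then exceeds what the paper offers for the ``if'' half; but the ``only if'' half cannot be completed along your proposed lines without adding nonvanishing hypotheses in the style of Theorems 1 and 3, or weakening the claimed disjunction.
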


\begin{proof}
The proof is similar to previous proof of theorem, it can be done by taking
equations (3) and (4).
\end{proof}

\begin{theorem}
The non-developable ruled surface in $\mathbb{R}^{3}$ which is corressponded
to dual spherical representation curve $(B)$ is Weingarten and minimal
surface if and only if $\kappa =0$ and $\tau ^{\ast }=0$, for $\kappa ^{\ast
}\neq 0$ and $\tau \neq 0$, where $\widehat{\kappa }=\kappa +\varepsilon
\kappa ^{\ast }$ and $\widehat{\tau }=\tau +\varepsilon \tau ^{\ast }$ are
curvature and torsion of the dual curve respectively.
\end{theorem}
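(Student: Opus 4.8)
The plan is to mirror the structure of the two preceding proofs, substituting the hypotheses directly into the curvature formulas (5) and (6) for the binormal ruled surface $\phi _{B}$. For the forward (``if'') direction, I would set $\kappa =0$ and $\tau ^{\ast }=0$, keeping $\kappa ^{\ast }\neq 0$ and $\tau \neq 0$. Equation (5) then gives $K_{B}=-\tau ^{2}\tau ^{\ast 2}/\left\Vert Y_{2_{B}}\right\Vert ^{4}=0$ immediately, since the numerator carries the factor $\tau ^{\ast 2}$. One must record here that $\tau ^{\ast }\equiv 0$ as a function forces $\tau ^{\ast \prime }=0$ as well; this is what makes the derivative terms in (6) vanish. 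Examining (6) term by term, every summand carries a factor of $\kappa $, $\tau ^{\ast }$, or $\tau ^{\ast \prime }$, so each vanishes under the hypotheses and hence $H_{B}=0$. With $K_{B}=0$ the surface is developable, so by the theorem of K\"{u}hnel cited in the preliminaries it is Weingarten; with $H_{B}=0$ it is minimal.

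For the converse (``only if'') direction, I would assume the surface is both Weingarten and minimal, which under the developability interpretation used throughout means $K_{B}=0$ and $H_{B}=0$. From (5) and $K_{B}=0$ we obtain $\tau ^{2}\tau ^{\ast 2}=0$; since $\tau \neq 0$ by hypothesis, this forces $\tau ^{\ast }=0$ (and again $\tau ^{\ast \prime }=0$). Substituting $\tau ^{\ast }=0$ into (6) collapses it to $H_{B}=-\tau ^{2}\kappa (\langle \beta _{N},B\rangle -v)^{2}/\left\Vert Y_{2_{B}}\right\Vert ^{3}$, after recognizing that the surviving $\kappa $-terms assemble into the perfect square $\langle \beta _{N},B\rangle ^{2}-2v\langle \beta _{N},B\rangle +v^{2}$. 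Since $H_{B}=0$ must hold identically in the ruling parameter $v$ and the square $(\langle \beta _{N},B\rangle -v)^{2}$ is not identically zero, while $\tau \neq 0$, we conclude $\kappa =0$, as required.

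The step I expect to be the main obstacle is the simplification of (6) in the converse: one has to group correctly the three terms proportional to $\kappa $ once $\tau ^{\ast }$ is eliminated, verify that they form the square $(\langle \beta _{N},B\rangle -v)^{2}$, and then exploit the free dependence on $v$ to conclude $\kappa =0$ rather than merely a pointwise identity. A secondary point worth stating explicitly is that $\left\Vert Y_{2_{B}}\right\Vert ^{2}=\tau ^{2}(\langle \beta _{N},B\rangle -v)^{2}$ remains nonzero for generic $v$ once $\tau ^{\ast }=0$, so that the curvature expressions stay well defined throughout.
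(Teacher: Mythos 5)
Your proposal is correct and follows exactly the route the paper intends: its entire proof of this theorem is the remark that ``equations (5) and (6) illuminate the solution,'' i.e.\ the same substitution argument used for the tangent case, which you carry out explicitly. Your added details --- the collapse of (6) to $-\tau ^{2}\kappa (\langle \beta _{N},B\rangle -v)^{2}/\left\Vert Y_{2_{B}}\right\Vert ^{3}$ after setting $\tau ^{\ast }=0$, and using the free parameter $v$ to force $\kappa =0$ --- are sound and in fact supply the verification the paper leaves implicit.
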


\begin{proof}
In this proof, equations (5) and (6) illuminate the solution.
\end{proof}

\bigskip

\.{I}lkay Arslan G\"{u}ven

University of Gaziantep

Department of Mathematics

\c{S}ehitkamil, 27310, Gaziantep, Turkey

E-mail: iarslan@gantep.edu.tr

\bigskip

Semra Kaya Nurkan, Murat Kemal Karacan

University of U\c{s}ak

Department of Mathematics

64200, U\c{s}ak, Turkey

E-mails: semrakaya\_gs@yahoo.com, mkkaracan@yahoo.com

\end{document}